\newtheorem{theorem}{Theorem}[section]
\newtheorem{corollary}[theorem]{Corollary}
\newtheorem{lemma}[theorem]{Lemma}
\theoremstyle{definition}
\newtheorem{definition}[theorem]{Definition}
\theoremstyle{remark}
\numberwithin{equation}{section}
\def\m{\mathbb M}
\def\r{\mathbb R}
\def\s{\mathbb S}
\def\h{\mathbb H}
\def\d{\mathbb D}
\def\c{\mathsf{C}}
\def\q{\mathbf{q}}
\def\L{\mathcal{L}}
\def\E{\mathcal{E}}
\def\ekt{\mathbb{E}(\kappa,\tau)}
\begin{document}

\title[Stability of cylinders in $\ekt$  homogeneous spaces]
{Stability of cylinders in $\ekt$ homogeneous spaces}
\author{Antonio Bueno}
\address{ Departamento de Ciencias. Centro Universitario de la Defensa de San Javier. 30729 Santiago de la Ribera, Spain}
\email{antonio.bueno@cud.upct.es}
\author[Rafael L\'opez]{Rafael L\'opez}
\address{Departamento de Geometr\'{\i}a y Topolog\'{\i}a Universidad de Granada 18071 Granada, Spain}
\email{rcamino@ugr.es}
\subjclass{Primary 53A10; Secondary 53C42}

\keywords{ $\ekt$ spaces, stability, Plateau-Rayleigh instability, Morse index, partitioning problem.}

\begin{abstract}
We extend the classical Plateau-Rayleigh instability criterion   in the $\ekt$ spaces. We prove the existence of a positive number $L_0>0$ such that if a truncated  circular cylinder of radius $\rho$ in $\ekt$ has length $L>L_0$ then it is unstable. This number $L_0$ depends on $\kappa$, $\tau$ and $\rho$.  The value $L_0$ is sharp under axially-symmetric variations of the surface. We also extend this result for the partitioning problem in $\ekt$.
\end{abstract}

\maketitle

\section{Introduction and statement of results}\label{s1}

In capillary theory \cite{de}, the Plateau-Rayleigh instability criterion   asserts that a truncated piece of a circular cylinder of radius $\rho>0$ in $\r^3$ is unstable if its length $L$ satisfies 
\begin{equation}\label{pla}
L>2\pi \rho.
\end{equation}
Circular cylinders are surfaces with constant mean curvature (cmc to abbreviate) and the inequality \eqref{pla} can be derived from a well-established theory of stability of cmc surfaces going back, at least, to the initial works of Barbosa, do Carmo and Eschenburg  \cite{bc,bce}. An analogous bound for cmc cylinders in the hyperbolic 3-space has been recently exhibited by the authors \cite{bulo}, and similar instability criteria for cylindrical liquids  have been obtained in other contexts of the capillary theory: see e.g. \cite{ag,blo,lo0,lo3,mc} and references therein.

In this paper, we consider the stability of truncated cylinders in  the family of  simply-connected homogeneous $3$-dimensional spaces whose isometry group has dimension $4$. These spaces can be parametrized by two real parameters $\kappa,\tau$ and are known as the $\ekt$ spaces. The $\ekt$ spaces complete the classification of the Thurston geometries along with  the space forms $\r^3$, $\h^3$ and $\s^3$, whose isometry group are of dimension $6$, and the space $Sol$, whose isometry group is of dimension $3$. The $\ekt$ spaces admit a Riemannian submersion $\pi:\ekt\to\m^2(\kappa)$ onto the $2$-dimensional space form $\m^2(\kappa)$  and their bundle curvature is $\tau$. If $\tau=0$, then $\mathbb{E}(\kappa,0)$ is one the  product spaces $\m^2(\kappa)\times\r$.  If $\tau\neq0$, then $\ekt$ is  the Heisenberg space   if $\kappa=0$; the universal cover of the special linear group   if $\kappa<0$; and the Berger spheres   if $\kappa>0$.

A local model for the $\ekt$ spaces is the following. If $r>0$, let $\d(r)=\{(x,y)\in\r^2\colon x^2+y^2<r^2\}$ be the disk of radius $r>0$.   
Let $\mathcal{R}(\kappa,\tau)$ be the space $\r^3$ if $\kappa\geq0$ or   $\mathbb{D}(2/\sqrt{-\kappa})\times\r$ if $\kappa<0$. Let us endow $\mathcal{R}(\kappa,\tau)$ with coordinates $(x,y,z)$ and the metric
$$
g=\sigma^2(dx^2+dy^2)+(\sigma\tau(ydx-xdy)+dz)^2,\quad \sigma=\frac{4}{4+\kappa(x^2+y^2)}.
$$
Then, $(\mathcal{R}(\kappa,\tau),g)$ is isometric to the   $\ekt$ space. The Riemannian submersion is isomorphic to the projection onto the first two coordinates, $\pi\colon \mathcal{R}(\kappa,\tau)\to \r^2$ if $\kappa\geq 0$ and $\pi\colon \mathcal{R}(\kappa,\tau)\to \mathbb{D}(2/\sqrt{-\kappa})$ if $\kappa< 0$. Notice that the base space in this submersion is the two-dimensional space form $\m^2(\kappa)$, equipped with the metric $\sigma^2(dx^2+dy^2)$. When $\kappa\leq0$ this model for the $\ekt$ spaces is global but if  $\kappa>0$, then  this model omits one fiber. Indeed, in the case $\kappa>0,\tau>0$ of the Berger spheres $\s^3_b$, if we regard $\s^3_b=\{(z,w)\in\mathbb{C}^2\colon |z|^2+|w|^2=1\}$ then an explicit isometry between $\mathcal{R}(\kappa,\tau)$ and $\s^3_b-\{(e^{i\theta},0)\colon\theta\in\r\}$ is
$$
\Psi(x,y,z)=\frac{1}{\sqrt{1+\frac{\kappa}{4}(x^2+y^2)}}\left(\frac{\sqrt{\kappa}}{2}(x+iy)e^{i\frac{\kappa}{4\tau}z},e^{i\frac{\kappa}{4\tau}z}\right).
$$
With this isometry we see that two points $(x,y,z)$ and $(x,y,z+\frac{8\tau\pi}{\kappa})$ are identified to the same point in $\s^3_b$. Finally, if $\kappa=\tau=0$ in this model, then $\mathbb{E}(0,0)$ is simply the Euclidean space $\r^3$, whose isometry group is of dimension $6$.

In the last decades, the theory of cmc surfaces in the $\ekt$ spaces has received the attention of many researchers since the extension of Hopf's theorem  by Abresch and Rosenberg \cite{abro}. This produced a vast literature  which, without aiming to collect it, we refer the reader to \cite{dan,dhm,femi} and references therein. One the most relevant topics   in the $\ekt$ spaces is the study of the stability of cmc surfaces  \cite{lm,mpr,mmp,ro}.   A cmc surface is said to be \emph{stable} if it is  a second order minimizer of the area functional under the preservation of the volume. In case  that we drop the volume preserving condition, the surface  is said \emph{strongly stable}. 

Our aim in this paper is the extension of the  Plateau-Rayleigh estimate \eqref{pla} in the context of   $\ekt$ spaces, and for that matter we need to generalize the analogues of the circular cylinders. The natural notion of a cylinder is the lifting in $\ekt$ of a circle of $\m^2(\kappa)$, where by a circle we mean a closed curve of $\m^2(\kappa)$ with constant geodesic curvature $\kappa_g$. In particular, if $\kappa\geq0 $ then $\kappa_g$ is any positive constant and if $\kappa<0$, then   $\kappa_g^2>-\kappa$. 

\begin{definition}   Given a curve   $\alpha\colon I\subset\r\to\m^2(\kappa)$,   the  {\it vertical cylinder}   over $\alpha$ is defined as  $\c_\alpha=\pi^{-1}(\alpha(I))$. The vertical cylinder is called {\it circular} if  $\alpha$ is a circle. In such a case, the radius of $\c_\alpha$ is  the radius of  $\alpha$.
\end{definition}

Notice that a vertical cylinder   $\c_\alpha$ has constant mean curvature if and only if $\kappa_g$ is constant because the mean curvature of $\c_\alpha$ is $\kappa_g/2$. It was proved in  \cite{mpr} that a cmc vertical cylinder $\c_\alpha$ is strongly stable if and only if   $\kappa_g^2+\kappa\leq0$. In particular this implies  $\kappa\leq 0$. If   $\kappa=0$, then  $\alpha$ is a geodesic ($\kappa_g=0$), but if  $\kappa<0$ there are three possibilities:  $\alpha$ is  a horocycle ($\kappa_g^2=-\kappa$),  an equidistant curve ($0<\kappa_g^2<-\kappa$) or a geodesic ($\kappa_g=0$). 

From the result of \cite{mpr},  the only cmc vertical cylinders that are not strongly stable are the circular ones. Following with  the   spirit of the classical Plateau-Rayleigh instability criterion, we want to establish conditions of stability of compact pieces of circular cylinders in the $\ekt$ spaces in terms of their length. We precise the terminology. Given a circular cylinder   $\c_\alpha$ and $L>0$, we define the \emph{truncated circular cylinder of length $L$} as the set $\c_\alpha(L)=\{(x,y,z)\in\c_\alpha\colon 0\leq z\leq L\}$. In other words,  $\c_\alpha(L)$ is the compact piece of  $\c_\alpha$ between the planes of equations $z=0$ and $z=L$. Since the translations $(x,y,z)\mapsto (x,y,z+t)$ are isometries in $\ekt$, the fact to fix   the $z$-coordinate between $0$ and $L$ does not lose generality in this definition.

The first result in this paper is the extension of the classical Plateau-Rayleigh result  in the $\ekt$ spaces.

\begin{theorem}\label{t1}
Let $\c_\alpha$ be a  circular cylinder of radius $\rho$  in $\ekt$. If $L_0$ is given by
\begin{equation}\label{pla1}
L_0=\left\{\begin{array}{ll}
\frac{2\pi}{-\kappa}\sinh(\rho\sqrt{-\kappa})\sqrt{-\kappa+4\tau^2\tanh^2(\frac{\rho\sqrt{-\kappa}}{2})}&\kappa<0,\\
2\pi \rho\sqrt{1+\tau^2\rho^2}&\kappa=0,\\
\frac{2\pi}{\kappa}\sin(\rho\sqrt{\kappa})\sqrt{\kappa+4\tau^2\tan^2(\frac{\rho\sqrt{\kappa}}{2})}&\kappa>0,
\end{array}
\right.
\end{equation}
then if $L>L_0$ the truncated cylinder $\c_\alpha(L)$ is not stable. If $\kappa>0$ and $\tau\neq0$ we assume, in addition, $L<8\tau\pi/\kappa$.
\end{theorem}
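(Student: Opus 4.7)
The plan is the classical Plateau-Rayleigh strategy: exhibit a single explicit admissible test function whose second variation is negative as soon as $L>L_0$. The setup has two geometric ingredients (the induced metric of the cylinder and the Jacobi operator) followed by the test-function computation.

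By an ambient isometry one may assume that $\alpha$ is centered at the origin of the disk model, with Euclidean radius $r$ related to $\rho$ by the standard formulas: $r=\rho$ if $\kappa=0$, $r=\frac{2}{\sqrt{\kappa}}\tan(\rho\sqrt{\kappa}/2)$ if $\kappa>0$, and $r=\frac{2}{\sqrt{-\kappa}}\tanh(\rho\sqrt{-\kappa}/2)$ if $\kappa<0$. Parametrize $\alpha$ by its $\m^2(\kappa)$-arc-length $s\in[0,\ell)$, where $\ell$ equals respectively $2\pi\rho$, $\frac{2\pi}{\sqrt{\kappa}}\sin(\rho\sqrt{\kappa})$ and $\frac{2\pi}{\sqrt{-\kappa}}\sinh(\rho\sqrt{-\kappa})$, and set $X(s,z)=(\alpha(s),z)$. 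Direct substitution into the metric $g$ produces the induced first fundamental form
\begin{equation*}
E=1+r^2\tau^2,\qquad F=-r\tau,\qquad G=1,
\end{equation*}
so $EG-F^2=1$; hence $dA=ds\,dz$ and $\mathrm{Area}(\c_\alpha(L))=\ell L$. The inverse metric satisfies $g^{zz}=1+r^2\tau^2$, so for a function $f=f(z)$ depending only on the vertical coordinate,
\begin{equation*}
|\nabla f|^2=(1+r^2\tau^2)\,f'(z)^2.
\end{equation*}
As recalled just before the theorem, \cite{mpr} shows that the zeroth-order term of the Jacobi operator $\Delta+|A|^2+\mathrm{Ric}(N,N)$ on $\c_\alpha$ equals $\kappa_g^2+\kappa$, with $\kappa_g$ the geodesic curvature of $\alpha$ in $\m^2(\kappa)$; explicitly, $\kappa_g^2+\kappa=1/\rho^2$, $\kappa/\sin^2(\rho\sqrt{\kappa})$ and $-\kappa/\sinh^2(\rho\sqrt{-\kappa})$ in the three cases, all strictly positive, and it is this positivity that permits a Plateau-Rayleigh instability.

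Now take $f(z)=\sin(2\pi z/L)$. It vanishes on $\partial\c_\alpha(L)$ and $\int_{\c_\alpha(L)}f\,dA=\ell\int_0^L\sin(2\pi z/L)\,dz=0$, so it is admissible for the volume-preserving stability, and using the gradient formula above together with Fubini one computes
\begin{equation*}
Q(f,f)=\int\bigl(|\nabla f|^2-(\kappa_g^2+\kappa)f^2\bigr)\,dA=\frac{\ell L}{2}\left(\frac{4\pi^2(1+r^2\tau^2)}{L^2}-(\kappa_g^2+\kappa)\right),
\end{equation*}
which is negative precisely when $L^2>4\pi^2(1+r^2\tau^2)/(\kappa_g^2+\kappa)$; substituting the values of $r$ and $\kappa_g^2+\kappa$ from the previous step one recovers term by term the three formulas of \eqref{pla1} for $L_0$. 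The hypothesis $L<8\tau\pi/\kappa$ in the Berger-sphere case is only used to ensure that $\c_\alpha(L)$ is embedded so that $f$ is well defined on it. The one subtle point is the off-diagonal coefficient $F=-r\tau$ of the induced metric: although $\partial_z$ is a unit Killing field in $\ekt$, the curve $s\mapsto(\alpha(s),0)$ is not a horizontal lift of $\alpha$, so on $\c_\alpha$ the vectors $\partial_z$ and $\partial_s$ fail to be orthogonal, and the resulting factor $1+r^2\tau^2$ in $|\nabla f|^2$ is precisely what produces the $\tau$-dependence of $L_0$; ignoring this twist gives the wrong value.
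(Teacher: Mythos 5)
Your proof is correct, and its geometric core coincides with the paper's: the induced metric \eqref{eqmetrica} with unit determinant, the coefficient $g^{zz}=1+r^2\tau^2$ produced by the non-horizontality of $\partial_s$ on $\c_\alpha$, and the zeroth-order term $|A|^2+\mathrm{Ric}(N)=\kappa_g^2+\kappa$ of the Jacobi operator are exactly the ingredients used in Sect. \ref{s4}, and your algebraic reduction of $2\pi\sqrt{(1+r^2\tau^2)/(\kappa_g^2+\kappa)}$ to the three formulas of \eqref{pla1} checks out. Where you diverge is the final logical step. The paper solves the full Dirichlet eigenvalue problem for $\L$ on $\c_\alpha(L)$ by separation of variables, obtains the discrete family $\lambda_n=\frac{n^2\pi^2}{L^2}(1+r^2\tau^2)-(\kappa_g^2+\kappa)$, and deduces instability from $\lambda_2<0$ through the index inequalities \eqref{ii}. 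You instead insert the single test function $u=\sin(2\pi z/L)$ (which is precisely the paper's second eigenfunction) directly into the quadratic form, check that it vanishes on $\partial\c_\alpha(L)$ and has zero mean, and verify $Q[u]<0$. Your route is more elementary: it needs neither the spectral machinery nor the comparison between the Morse index and the weak Morse index, and it fully suffices for the instability assertion of Theorem \ref{t1}. What it does not deliver is the sharpness under axially-symmetric variations recorded in Cor. \ref{cor}, for which the paper needs the complete list of separated eigenvalues and eigenfunctions. One minor point worth flagging: $\sin(2\pi z/L)$ vanishes on the boundary but is not compactly supported in the interior, so to place it in the admissible class $\{u\in C_0^\infty\colon\int u=0\}$ one should approximate it in $H^1_0$ by zero-mean smooth functions of compact support; since $Q[u]<0$ is an open condition this is routine, and the paper's own eigenfunctions have the same feature.
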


Notice that if $\kappa=\tau=0$, then $L_0=2\pi\rho$ and theorem rediscovers the classical Plateau-Rayleigh instability criterion \eqref{pla} in $\r^3$. In the case $\kappa>0$ and $\tau\neq 0$ corresponding to the Berger spheres, the assumption $L<8\tau\pi/\kappa$ comes from the fact that if $L\geq 8\tau\pi/\kappa$ then $\c_\alpha(L)$ is identified as a torus by the periodicity of the fibers. We will also prove that this   inequality is optimal in case that we consider axially-symmetric variations of the surface (Cor. \ref{cor}).

In the second result of this paper we study the stability of vertical cmc cylinders when regarded as solutions of the partitioning problem in the $\ekt$ spaces. The general setting   is analogous to the Euclidean case and $3$-manifolds in general \cite{nit,rv,so}. Given a domain $W\subset\ekt$ with smooth boundary $\partial W$, a surface $\Sigma$ with $\mathrm{int}(\Sigma)\subset\mathrm{int}(W)$ and $\partial\Sigma\subset\partial W$ is a \emph{capillary surface} if is a critical point of the area functional among all surfaces in these conditions that separate $W$ into two domains of prescribed volumes. Any capillary surface is characterized by the fact that has constant mean curvature and the contact angle that makes $\Sigma$ with $\partial W$ along $\partial\Sigma$ is constant. In this context,  we have   similar notions of stability.

In the $\ekt$ spaces, we investigate the stability in the partitioning problem of truncated pieces of vertical cmc cylinders between two planes. To be precise, let $\Pi_c=\{(x,y,z)\in\ekt\colon z=c\}$, $c\in\r$. Let $\c_\alpha$ be a vertical cmc cylinder and $\c_\alpha(L)=\{(x,y,z)\in\c_\alpha:0\leq z\leq L\}$. Notice that we are now including the case that $\kappa_g^2+\kappa\leq 0$, that is, $\alpha$ is not circle. If $\alpha$ is a circle, then   the intersection of $\c_\alpha(L)$ with  the  support planes $\Pi_0\cup\Pi_L$ is orthogonal, hence $\c_\alpha(L)$ is a capillary surface. The result that we prove is the following. 

\begin{theorem}\label{t2}
Let $\c_\alpha(L)$ be a truncated circular cylinder of radius $\rho$ and length $L$ in $\ekt$, supported on the planes $\Pi_0\cup\Pi_L$. If $L>L_0$, where $L_0$ is given by \eqref{pla1} then $\c_\alpha(L)$ is not stable in the partitioning problem. If $\kappa,\tau>0$, we assume moreover $L<8\tau\pi/\kappa$.
\end{theorem}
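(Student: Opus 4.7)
The plan is to deduce Theorem \ref{t2} from Theorem \ref{t1} by a direct admissibility argument: any variation that destabilizes $\c_\alpha(L)$ in the free-boundary setting of Theorem \ref{t1} is already admissible for the partitioning problem, and the second variation agrees on such variations.

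First I would write down the second variation formula for the partitioning problem. For the capillary surface $\Sigma=\c_\alpha(L)$ meeting $\Pi_0\cup\Pi_L$ orthogonally, a normal variation with velocity $uN$ satisfying the volume constraint $\int_\Sigma u\,dA=0$ has second variation
\begin{equation*}
Q(u,u)=\int_\Sigma\bigl(|\nabla u|^2-(|A|^2+\mathrm{Ric}(N))\,u^2\bigr)\,dA-\int_{\partial\Sigma}q\,u^2\,ds,
\end{equation*}
where $q$ is a boundary potential determined by the second fundamental form of $\partial W=\Pi_0\cup\Pi_L$ along $\partial\Sigma$, and the admissible $u$ need not vanish on $\partial\Sigma$ because the boundary is allowed to slide along the support planes.

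Next I would recycle the destabilizing test function already furnished by Theorem \ref{t1}. For $L>L_0$, that theorem produces $u_0\in C_0^\infty(\mathrm{int}(\Sigma))$ with $\int_\Sigma u_0\,dA=0$ and
\[
\int_\Sigma\bigl(|\nabla u_0|^2-(|A|^2+\mathrm{Ric}(N))u_0^2\bigr)\,dA<0.
\]
Since $u_0$ vanishes on $\partial\Sigma$, the boundary term in $Q(u_0,u_0)$ drops out and the remaining expression coincides with the negative quantity above. Meanwhile, $u_0$ is admissible for the partitioning problem: the volume constraint is the same, and the induced normal variation keeps $\partial\Sigma$ pointwise fixed and hence inside $\Pi_0\cup\Pi_L$. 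Consequently $Q(u_0,u_0)<0$ in the partitioning setting, proving the instability.

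The main technical step is therefore just the verification of the second variation formula above with the correct boundary potential $q$ and the observation that $q\,u_0^2$ vanishes on compactly supported test functions; I do not anticipate any serious obstacle here beyond standard capillary second-variation bookkeeping. The restriction $L<8\tau\pi/\kappa$ in the Berger case is inherited from Theorem \ref{t1} to ensure $\Sigma$ is an embedded cylinder rather than a torus. A sharper threshold in the partitioning setting could presumably be obtained by exploiting Neumann test functions such as $\cos(\pi z/L)$ at the cost of treating $q$ directly, but this refinement lies outside the scope of the present reduction.
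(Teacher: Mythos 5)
Your argument is correct, but it takes a genuinely different route from the paper. The paper proves Theorem \ref{t2} directly: it verifies that $\c_\alpha(L)$ meets $\Pi_0\cup\Pi_L$ orthogonally (so $\gamma=\pi/2$), computes the boundary potential $\q=\widetilde{A}(\widetilde{\nu},\widetilde{\nu})$ explicitly from the second fundamental form of the planes and shows $\q=0$, and then solves the resulting Neumann eigenvalue problem by separation of variables, finding the same eigenvalues $\lambda_n=\frac{n^2\pi^2}{L^2}(1+r^2\tau^2)-(\kappa_g^2+\kappa)$ as in the Dirichlet case and imposing $\lambda_2<0$. You instead exploit the inclusion of test-function classes: a zero-mean function vanishing on $\partial\Sigma$ kills the boundary integral $\int_{\partial\Sigma}u(\partial u/\partial\nu-\q u)$ outright, so the partitioning quadratic form restricted to such functions coincides with the fixed-boundary one, and the destabilizing mode from Theorem \ref{t1} transfers. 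This is cleaner and more robust --- you never need to compute $\q$ or even know that the contact is orthogonal for the instability conclusion --- but it yields strictly less information: the paper's computation exhibits the full (separated-variables) Neumann spectrum, which is what one would need to discuss sharpness in the partitioning setting. Two minor points you should tighten: (i) the eigenfunction $\sin(2\pi t/L)$ vanishes on $\partial\Sigma$ but is not compactly supported in the interior, so either work directly with boundary-vanishing variations (noting that $N$ is tangent to the support planes along $\partial\Sigma$, so the variation can be made admissible) or insert the standard cutoff/density argument that preserves both the strict negativity and the zero-mean constraint; (ii) your closing observation is sharper than you suggest --- the Neumann mode $\cos(\pi t/L)$ has zero mean, so the partitioning problem is in fact destabilized already for $L>L_0/2$, which indicates that the threshold $L_0$ in Theorem \ref{t2} is not optimal.
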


In case that $\kappa_g^2+\kappa\leq 0$, the vertical cmc cylinder $\c_\alpha(L)$ makes constant contact angle with $\Pi_0\cup\Pi_L$  only if the ambient space is $\h^2(\kappa)\times\r$. In such a case, we will prove in Thm. \ref{t5} that $\c_\alpha(L)$ is strongly stable regardless of the value $L$. Theorem \ref{t5} is analogous to the result proved in \cite{mpr} but in the context of the partitioning problem.  

The  organization of the paper is the following. In Sect. \ref{s2} we investigate the model $\mathcal{R}(\kappa,\tau)$ of the $\ekt$ spaces. In Sect. \ref{s3} we introduce the two variational problems considered in this paper. In both cases,   a self-adjoint elliptic operator is defined and the stability problem can be reformulated in terms of the eigenvalues of this operator under suitable boundary conditions. For  Thm. \ref{t1} the eigenvalue problem has Dirichlet conditions, while in the case of Thm. \ref{t2} are of Neumann type. The proof of Thm. \ref{t1} is given in Sect. \ref{s4} and the proof of Thm. \ref{t2} is exhibited in Sect. \ref{s5}.

\section{Vertical cylinders in $\ekt$}\label{s2}

For each $\ekt$ space, we consider the model $\mathcal{R}(\kappa,\tau)$ described in Sect. \ref{s1}. In this model,  the vector field $E_3=\partial_z$ is a Killing vector field and $\tau$ is characterized by the property $\overline{\nabla}_XE_3=\tau X\times E_3$ for all $X\in\mathfrak{X}(\ekt)$, where $\overline{\nabla}$ is the Levi-Civita connection of $\ekt$.  Using this  model, an orthonormal basis of the tangent space is given by $\{E_1,E_2,E_3\}$, where
\begin{equation*}
\begin{split}
E_1=\frac{1}{\sigma}\partial_x-\tau  y\partial_z,\quad
E_2=\frac{1}{\sigma}\partial_y+\tau  x\partial_z,\quad
E_3=\partial_z.
\end{split}
\end{equation*}
Hence, we have 
\begin{equation}\label{equi}
\begin{split}
\partial_x=\sigma (E_1+\tau  y E_3),\quad
\partial_y=\sigma (E_2-\tau  x E_3),\quad
\partial_z=E_3.
\end{split}
\end{equation}
The Levi-Civita connection $\overline{\nabla}$ is determined by the relations
\begin{equation}\label{eqc}
\begin{array}{lll}
\overline{\nabla}_{E_1}E_1=-\dfrac{\sigma_y}{\sigma^2}E_2, &\overline{\nabla}_{E_1}E_2=\dfrac{\sigma_y}{\sigma^2}E_1+\tau E_3, & \overline{\nabla}_{E_1}E_3=-\tau E_2 ,\\
\overline{\nabla}_{E_2}E_1=\dfrac{\sigma_x}{\sigma^2}E_2-\tau E_3,& \overline{\nabla}_{E_2}E_2=-\dfrac{\sigma_x}{\sigma^2}E_1, & \overline{\nabla}_{E_2}E_3=\tau E_1,\\
\overline{\nabla}_{E_3}E_1=-\tau E_2, &\overline{\nabla}_{E_3}E_2=\tau E_1, &\overline{\nabla}_{E_3}E_3=0,\\
\end{array}
\end{equation}
Here, $\sigma_x$ and $\sigma_y$ stand for the partial derivatives of $\sigma$ with respect to $x$ and $y$, respectively.

Let $\c_\alpha$ be a vertical cylinder, where  $\alpha\colon I\to \m^2(\kappa)$ is a regular curve. If  $\alpha$ is parametrized by arc-length, a basis of the tangent plane on $\c_\alpha$ is $\{\alpha',E_3\}$ where $\alpha'$ refers to the horizontal lift of $\alpha'$   through the submersion $\pi$. Let $N$ be the unit normal vector on $\c_\alpha$ chosen so $\alpha'\times E_3=N$. The matricial expression of the second fundamental form $A$ of $\c_\alpha$   with respect to $\{\alpha',E_3\}$ is
$$A=
\left(
\begin{array}{ll}
\langle \overline{\nabla}_{\alpha'}\alpha',N\rangle & \langle\overline{\nabla}_{\alpha'}E_3,N\rangle\\
\langle\overline{\nabla}_{E_3}\alpha',N\rangle & \langle\overline{\nabla}_{E_3}E_3,N\rangle
\end{array}
\right)=\left(
\begin{array}{ll}
\kappa_g & \tau\\
\tau & 0
\end{array}
\right),
$$
where $\kappa_g$ is the geodesic curvature of $\alpha$ as a curve in $\m^2(\kappa)$.  The following result compiles some properties of the vertical cylinders: see \cite[Appendix]{mpr}.

\begin{lemma}\label{lem1}
Let $\c_\alpha$ be a vertical cylinder.   Then, 
\begin{enumerate}
\item The Gauss curvature of $\c_\alpha$ is $K=0$ and the mean curvature   is $H=\kappa_g/2$.
\item The norm of the second fundamental form $A$ is $|A|^2=\kappa_g^2+2\tau^2$.
\item The Ricci curvature along the direction of $N$ is $\mathrm{Ric}(N)=\kappa-2\tau^2$.
\end{enumerate}
\end{lemma}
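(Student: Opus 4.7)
The starting observation is that the notation $\alpha'$ stands for the horizontal lift of the velocity of the base curve; it is unit and perpendicular to the Killing field $E_3$, so $\{\alpha',E_3\}$ is an orthonormal basis of $T\c_\alpha$. Consequently, from the matrix
$$A=\begin{pmatrix}\kappa_g & \tau\\ \tau & 0\end{pmatrix}$$
already computed before the statement, the mean curvature in (i) is $H=\tfrac12\operatorname{tr}(A)=\kappa_g/2$ and (ii) follows from $|A|^{2}=\operatorname{tr}(AA^{T})=\kappa_g^{2}+2\tau^{2}$.

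For the vanishing of the Gauss curvature in (i), the plan is to parametrize $\c_\alpha$ as $\phi(s,t)=\Phi_t(\tilde\alpha(s))$, where $\tilde\alpha$ is the arc-length horizontal lift of $\alpha$ and $\Phi_t$ is the one-parameter group of isometries generated by $E_3$. Then $\phi_s=\tilde\alpha'$ and $\phi_t=E_3$ constitute a global orthonormal frame on $\c_\alpha$, so the induced metric reads $ds^{2}+dt^{2}$ and is intrinsically flat. Equivalently, the Gauss equation $K=\overline{K}(T_p\c_\alpha)+\det A$ gives the same conclusion once one uses $\det A=-\tau^{2}$ together with the standard fact that the sectional curvature of $\ekt$ along a vertical $2$-plane equals $\tau^{2}$.

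For (iii), since $\ekt$ acts transitively on its horizontal unit tangent vectors, one may replace $N$ by $E_1$ in the Ricci computation. Expanding along the orthonormal frame $\{E_1,E_2,E_3\}$ gives
$$\mathrm{Ric}(N)=\mathrm{Ric}(E_1,E_1)=\overline{K}(E_1\wedge E_2)+\overline{K}(E_1\wedge E_3),$$
so two sectional curvatures must be computed. Starting from the connection identities \eqref{eqc} and the definition $R(X,Y)Z=\overline{\nabla}_X\overline{\nabla}_YZ-\overline{\nabla}_Y\overline{\nabla}_XZ-\overline{\nabla}_{[X,Y]}Z$, I would evaluate $\langle R(E_1,E_2)E_2,E_1\rangle$ and $\langle R(E_1,E_3)E_3,E_1\rangle$ to obtain the well-known values $\overline{K}(E_1\wedge E_2)=\kappa-3\tau^{2}$ and $\overline{K}(E_1\wedge E_3)=\tau^{2}$, which sum to $\kappa-2\tau^{2}$.

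The main technical obstacle lies in the horizontal sectional curvature $\overline{K}(E_1\wedge E_2)$: the coefficients in \eqref{eqc} involve $\sigma_x,\sigma_y$, and the second covariant derivatives required by $R$ introduce terms in $\sigma_{xx},\sigma_{xy},\sigma_{yy}$ and in the bracket $[E_1,E_2]$. A somewhat delicate algebraic cancellation, using $\sigma=4/(4+\kappa(x^{2}+y^{2}))$ and the identity $\sigma_{xx}+\sigma_{yy}=\kappa\sigma^{3}/2$, collapses these terms to the constant value $\kappa-3\tau^{2}$. By contrast, the mixed curvature $\overline{K}(E_1\wedge E_3)$ follows almost immediately from the last row of \eqref{eqc}, and the two preceding items of the lemma require no curvature computation at all.
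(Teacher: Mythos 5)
The paper does not actually prove this lemma; it is quoted from the Appendix of \cite{mpr}, so your direct verification is a legitimate self-contained substitute, and its architecture is sound. The orthonormality of $\{\alpha',E_3\}$ (horizontal lift of a unit vector plus the unit vertical field) does justify reading $H=\tfrac12\operatorname{tr}A$ and $|A|^2=\kappa_g^2+2\tau^2$ off the displayed matrix; for $K=0$ you could even skip the frame argument, since \eqref{eqmetrica} already exhibits the induced metric with constant coefficients in $(s,t)$, hence flat, and your Gauss-equation cross-check $K=\tau^2+\det A=\tau^2-\tau^2=0$ is consistent; and the reduction of $\mathrm{Ric}(N)$ to $\overline{K}(E_1\wedge E_2)+\overline{K}(E_1\wedge E_3)=(\kappa-3\tau^2)+\tau^2$ via the rotational isotropy of $\ekt$ about the fibers is the standard route and gives the right answer. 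The computation $\overline{K}(E_1\wedge E_3)=\tau^2$ indeed drops out of the last row of \eqref{eqc} since $[E_1,E_3]=0$.

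One concrete slip: the auxiliary identity you invoke for the ``delicate'' horizontal curvature, $\sigma_{xx}+\sigma_{yy}=\kappa\sigma^3/2$, is false. From $\sigma_x=-\tfrac{\kappa x}{2}\sigma^2$, $\sigma_y=-\tfrac{\kappa y}{2}\sigma^2$ one gets $\sigma_{xx}+\sigma_{yy}=\kappa\sigma^2(1-2\sigma)$, which is not even proportional to $\sigma^3$; at the origin it equals $-\kappa$, not $\kappa/2$. The identity that actually drives the cancellation is the conformal-factor equation $\partial_{xx}\log\sigma+\partial_{yy}\log\sigma=-\kappa\sigma^2$ (equivalently $\sigma(\sigma_{xx}+\sigma_{yy})-\sigma_x^2-\sigma_y^2=-\kappa\sigma^4$), which is precisely the statement that $\sigma^2(dx^2+dy^2)$ has curvature $\kappa$; feeding this into O'Neill's formula for the submersion $\pi$ gives $\overline{K}(E_1\wedge E_2)=\kappa-3\tau^2$. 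Since your claimed end value is correct, this does not invalidate the proof plan, but the step you single out as the technical core would not close as written.
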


Since  the metric on $\m^2(\kappa)$ is $\sigma^2(dx^2+dy^2)$, the circle $\alpha$  can be parametrized, up to an isometry in $\m^2(\kappa)$, by  $$
\alpha(s)=\left(r\cos\frac{s}{\sigma r},r\sin\frac{s}{\sigma r}\right),\qquad r>0,\,  \sigma=\frac{4}{4+\kappa r^2}.
$$
The center of $\alpha$ is the origin $(0,0)$  of $\r^2$ if $\kappa\geq0$ or $\mathbb{D}(2/\sqrt{-\kappa})$ if $\kappa<0$. The radius $\rho$ of $\alpha$ is given by 
$$
\rho=\left\{\begin{array}{ll}
\frac{2}{\sqrt{-\kappa}}\mathrm{arctanh}\frac{r\sqrt{-\kappa}}{2}&\kappa<0,\\
r&\kappa=0,\\
\frac{2}{\sqrt{\kappa}}\arctan\frac{r\sqrt{\kappa}}{2}&\kappa>0.
\end{array}\right.$$
The geodesic curvature $\kappa_g$ of $\alpha$ in $\m^2(\kappa)$ is constant and  given by
\begin{equation}\label{eqkg}
\kappa_g=\frac{4-\kappa r^2}{4r}=\left\{\begin{array}{ll}
\sqrt{-\kappa}\coth(\rho\sqrt{-\kappa})&\kappa<0,\\
\dfrac{1}{\rho}&\kappa=0,\\
\sqrt{\kappa}\cot(\rho\sqrt{\kappa})&\kappa>0.
\end{array}\right.
\end{equation}
Let
$$
R= r\sigma=\frac{4r}{ 4+\kappa r^2 }=
\left\{\begin{array}{ll}
\frac{1}{\sqrt{-\kappa}}\sinh(\rho\sqrt{-\kappa})&\kappa<0,\\
\rho&\kappa=0,\\
\frac{1}{\sqrt{\kappa}}\sin(\rho\sqrt{\kappa})&\kappa>0.
\end{array}\right.
$$
The lifting of $\alpha$ via the submersion provides a parametrization of   $\c_\alpha$, namely, 
\begin{equation}\label{para}
\psi(s,t)=\left(r\cos\frac{s}{R},r\sin\frac{s}{R},t\right),\quad s,t\in\r.
\end{equation}
We compute the first fundamental form $(g_{ij})$. Using \eqref{equi}, we obtain
\begin{equation*}
\begin{split}
\psi_s&=\frac{1}{\sigma}(-\sin\frac{s}{R}, \cos\frac{s}{R},0)=-\sin \frac{s}{R} E_1+\cos \frac{s}{R} E_2-\tau r E_3,\\
\psi_t&=(0,0,1)=E_3.
\end{split}
\end{equation*}
Thus $(g_{ij})$ and its inverse $(g^{ij})$ are the matrices
\begin{equation}\label{eqmetrica}
(g_{ij})=\left(\begin{matrix}
1+r^2\tau^2&-r\tau\\
-r\tau&1
\end{matrix}\right),\quad  (g^{ij}) =\left(\begin{matrix}
1&r\tau\\
r\tau&1+r^2\tau^2
\end{matrix}\right).
\end{equation}
Finally, by Lem. \ref{lem1}, the mean curvature $H$ of $\c_\alpha$ is constant with $H=\kappa_g/2$.

\section{Stability of cmc surfaces in $\ekt$ spaces}\label{s3}

In this section, we recall the notions of stability and index of a cmc surface in the contexts of fixed boundary (Thm. \ref{t1}) and the partitioning problem (Thm. \ref{t2}). We begin with the first case. Let $\Sigma$ be an oriented surface in $\ekt$ and let $\{\Sigma_t:t\in (-\epsilon,\epsilon)\}$ be a compactly supported variation of $\Sigma$. If we define   the functionals
$$
\mathcal{A}(t)=\mbox{Area}(\Sigma_t),\qquad \mathcal{V}(t)=  \mbox{Volume}(\Sigma_t),
$$ 
 it is known that $\Sigma$ is a critical point of $\mathcal{A}$ for all volume preserving variations if and only the mean curvature $H$ of $\Sigma$ is constant. In such a case, $\Sigma$ is said to be {\it stable} if   $\mathcal{A}''(0)\geq 0$ for all compactly supported normal variations that preserve the volume of $\Sigma$. If we drop the volume preserving condition in the variations of $\Sigma$, we say that $\Sigma$ is \emph{strongly stable}. Stability of $\Sigma$ is equivalent to  
\begin{equation}\label{e2}
\mathcal{A}''(0)=-\int_\Sigma u(\Delta u+|A|^2u+\mathrm{Ric}(N)u)\geq0,
\end{equation}
for all $u\in C_0^\infty(\Sigma)$ such that $\int_\Sigma u=0$.  Here $\Delta$ is the Laplacian operator on $\Sigma$, $A$ is the second fundamental form of $\Sigma$, $N$ is the unit normal vector field of $\Sigma$ and $\mathrm{Ric}$ the Ricci curvature of $\ekt$. The mean zero integral  $\int_\Sigma u=0$ comes from the condition that the variations preserve the volume of $\Sigma$. In consequence,   $\Sigma$ is strongly stable if $\mathcal{A}''(0)\geq 0$ for all $u\in C_0^\infty(\Sigma)$.

The parenthesis in \eqref{e2} defines the {\it Jacobi operator}   by 
\begin{equation}\label{jac}
\L=\Delta+|A|^2+\mathrm{Ric}(N),
\end{equation}
which is a self-adjoint elliptic operator. Since $\L$ is self-adjoint, we define  the quadratic form  $Q$ by 
$$Q[u]=-\int_\Sigma u\cdot \L[u],$$
in the space ${\mathcal V}=\{u\in C_0^\infty(\Sigma)\colon \int_\Sigma u=0\}$.  The  {\it weak Morse index} of $\Sigma$, denoted by $\mbox{index}_w(\Sigma)$, is defined as the maximum dimension of any subspace of ${\mathcal V}$ on which $Q$ is negative definite.   In a certain sense, the weak index measures the ways to reduce the area of $\Sigma$, up to second order, preserving the volume of $\Sigma$. Thus $\Sigma$ is stable if and only if $\mbox{index}_w(\Sigma)=0$.

Suppose $\Sigma$ is compact. Then $\mbox{index}_w(\Sigma)$  coincides with the number of negative eigenvalues $\lambda$ of the   eigenvalue problem     
\begin{equation}\label{eq10}
\left\{\begin{split}
\L[u]+\lambda u=0&\mbox{ in}\ \Sigma,\\
u=0& \mbox{ in}\  \partial\Sigma,\\
u&\in{\mathcal V}.
\end{split}\right.
\end{equation}
Since the condition $\int_\Sigma u=0$ is difficult to work with,  instead of \eqref{eq10}, we consider the eigenvalue problem
\begin{equation}\label{p-e}
\left\{\begin{split}
\L[u]+\lambda u=0&\mbox{ in}\ \Sigma,\\
u=0& \mbox{ in}\  \partial\Sigma,\\
u& \in C_0^\infty(\Sigma).
\end{split}\right.
\end{equation}
By the ellipticity of $\L$, it is well-know that the eigenvalues of \eqref{p-e} (also  of \eqref{eq10}) are ordered as a discrete spectrum  $\lambda_1<\lambda_2 \leq\lambda_3\cdots\nearrow \infty$ counting multiplicity.   The {\it Morse index} of $\Sigma$, denoted $\mbox{index}(\Sigma)$, is the number of negative eigenvalues of \eqref{p-e}. Since in \eqref{p-e} it is only required that $u$ belongs to  $C_0^\infty(\Sigma)$, we have that $\mbox{index}(\Sigma)=0$ if and only if $\Sigma$ is strongly stable or equivalently, $\lambda_1\geq 0$.   Both indexes are related by the inequalities   
\begin{equation}\label{ii}
\mbox{index}_w(\Sigma)\leq \mbox{index}(\Sigma)\leq \mbox{index}_w(\Sigma)+1.
\end{equation}
The instability criterion in the Plateau-Rayleigh estimate \eqref{pla} and in the results of this paper (Thms. \ref{t1} and \ref{t2}) are obtained once we impose the condition   $\lambda_2<0$ in the eigenvalue problem \eqref{p-e}. In such a case, the inequalities \eqref{ii} imply     $\mathrm{index}_w(\Sigma)\geq1$. This proves that $\Sigma$ is not stable.

When $\Sigma$ is not compact, the definition of the index of $\Sigma$ is given by taking  an exhaustion $\Sigma_1\subset\Sigma_2\subset\ldots\subset  \Sigma$ by bounded subdomains of $\Sigma$.  Then the weak Morse index and the Morse index of $\Sigma$ are defined by 
\begin{equation}\label{stable-infinito}
\mbox{index\,}_w(\Sigma)=\lim_{n\to\infty}\mbox{index\,}_w(\Sigma_n),\quad \mbox{index\,}(\Sigma)=\lim_{n\to\infty}\mbox{index\,}(\Sigma_n).
\end{equation}
These definitions  are independent of the choice of the exhaustion of $\Sigma$. Both numbers can be infinite, but if they are finite, then the relation \eqref{ii} holds too.  

Stability in the partitioning problem is defined  similarly. Let $S$ be a surface that separates $\ekt$ into two domains and name $W$ to one of them, having $\partial W=S$. Let $\Sigma$ be an orientable    surface with non-empty boundary $\partial\Sigma$ such that    $ \mathrm{int}(\Sigma)\subset \mathrm{int}(W)$ and $ \partial\Sigma\subset S$. Assume that $\mathrm{int}(\Sigma)$ separates $W$ into two connected components, each having as boundary the union of $\mathrm{int}(\Sigma)$ and a domain in $S$. Fix one of these components, say $D$, and let $\Omega=\partial D\cap S$. An \emph{admissible variation} of $\Sigma$ is a variation $\{\Sigma_t\colon t\in (-\epsilon,\epsilon)\}$ such that $\mathrm{int}(\Sigma_t)\subset\mathrm{int}(W)$ and $\partial\Sigma_t\subset S$. By denoting $\Omega(t)$ to the domain bounded by $\partial\Sigma_t$ in $S$, we define the energy functional
$$
\E(t)=\mbox{Area}(\Sigma_t)-\cos\gamma\, \mbox{Area}(\Omega(t)),
$$
where $\gamma\in (0,\pi)$. A surface $\Sigma$ is a critical point of $\E$ for all volume preserving variations of $\Sigma$ if and only if the mean curvature $H$ of $\Sigma$ is constant and the angle   between $\Sigma$ and $S$ along $\partial\Sigma$ is constant and it coincides with $\gamma$. In such a case,  we say that $\Sigma$ is a {\it capillary surface}. The angle $\gamma$ is the angle formed by the unit normal vectors $N$ of $\Sigma$ and $\widetilde{N}$ of $S$ along $\partial\Sigma$, that is $\cos\gamma=\langle N,\widetilde{N}\rangle$. The vector   $N$ points into  $\Omega$ whereas $\widetilde{N}$ points outwards $\Omega$.  See \cite{rs,rv} for details. The second variation of $\E$ is 
\begin{equation*}
\E''(0)=-\int_\Sigma u\cdot \L[u]+\int_{\partial \Sigma}
u\Big(\frac{\partial u}{\partial\nu}-\q u\Big),
\end{equation*}
where
\begin{equation}\label{qq}
\q=\frac{1}{\sin\gamma} \widetilde{A}(\widetilde{\nu},\widetilde{\nu})+\frac{\cos\gamma}{\sin\gamma}\, A(\nu,\nu).
\end{equation}
Here $\widetilde{A}$ is the second fundamental  of $S$ with respect to $-\widetilde{N}$.  The vectors $\nu$ and $\widetilde{\nu}$ are the exterior  unit conormal vectors of $\partial\Sigma$ on $\Sigma$ and on $S$ respectively. Associated to the quadratic form $\E''(0)$   we also have the notions of   weak Morse index and Morse index. The eigenvalue problems \eqref{eq10} and \eqref{p-e} are the same but replacing the  condition $u=0$ in $\partial\Sigma$ by the so-called Robin condition
\begin{equation}\label{nu2}
\frac{\partial u}{\partial \nu} - \q u=0\quad \mbox{ in}\  \partial\Sigma.
\end{equation}

\section{Proof of Theorem \ref{t1}.}\label{s4}

Under the hypothesis of Thm. \ref{t1}, we know $\kappa_g^2+\kappa>0$ and $\c_\alpha$ is parametrized by \eqref{para}. By   Lem. \ref{lem1},    the Jacobi operator is 
$$\L=\Delta+\kappa_g^2+\kappa.$$
For the computation of the Laplacian $\Delta$, we use its expression in local coordinates $\psi=\psi(s,t)$, namely, 
$$
\Delta u=\frac{1}{\sqrt{\det(g_{ij})}}\sum_{i,j=1}^2\partial_i\left(\sqrt{\det (g_{ij})}\,g^{ij}\partial_j u\right),
$$
 where $\partial_1=\partial_s$ and $\partial_2=\partial_t$. Notice that $g^{11}=1$, $g^{12}=r\tau$, $g^{22}=1+r^2\tau^2$ and $\det(g_{ij})=1$.  If $u=u(s,t)$,   then it is immediate
\begin{equation}\label{eqLaplacianocoordenadas}
\Delta u=u_{ss}+2r\tau u_{st}+(1+r^2\tau^2)u_{tt}.
\end{equation}
We now compute the eigenvalues of the eigenvalue problem \eqref{p-e} for the truncated cylinders $\c_\alpha(L)$. Let us observe that the domain of $u$ is the rectangle $[0,2\pi R]\times [0,L]$ in the $(s,t)$-plane and $u$ is $2\pi R$-periodic in the $s$-variable. Consider separation of variables $u(s,t)=f(s)g(t)$, where $f=f(s)$ and $g=g(t)$ are smooth functions of one variable and $f$ is $2\pi R$-periodic. Using that $\alpha$ is a closed curve and \eqref{eqLaplacianocoordenadas}, the eigenvalue problem \eqref{p-e} is
\begin{equation}\label{eq1}
\left\{\begin{split}
&f''g+2r\tau f'g'+(1+r^2\tau^2)fg''+\left(\kappa_g^2+\kappa+\lambda\right)fg=0, \quad \mbox{in } [0,2\pi R]\times [0,L]\\
&g(0)=g(L) =0.
\end{split}\right.
\end{equation}
Here it is understood that the derivative $(\cdot)'$ is with respect to     each one of the variables of $f$ and $g$. Dividing the first equation by $fg$, we have 
$$
\frac{f''}{f}+2r\tau\frac{f'}{f}\frac{g'}{g}+(1+r^2\tau^2)\frac{g''}{g}+\kappa_g^2+\kappa+\lambda=0.
$$
Differentiating with respect to $s$ and next with respect to $t$, we obtain
$$\left(\frac{f'}{f}\right)'\left(\frac{g'}{g}\right)'=0.$$
This equation implies that either $f'/f$ or $g'/g$ are constant functions. If $g'/g=c\in\r$, then either $g(t)$ is a constant function or $g(t)=e^{ct}$. The boundary condition $g(0)=g(L)=0$ implies that if $g$ is constant then $g=0$ and hence $u(s,t)=0$, which it is not possible. In case that $g(t)=e^{ct}$,  then the boundary condition is not fulfilled. 

Consequently $f'/f=c$ for some constant $c\in\r$.   We have two possibilities.
\begin{enumerate}
\item Case $c\neq 0$. Then $f(s)=e^{cs}$ but this case is not possible because $f$ is a periodic function.  
\item Case $c=0$. Then $f(s)$ is a non-zero constant function since otherwise $u(s,t)=0$. Then Eq. \eqref{eq1} reduces to 
\begin{equation}\label{g2}
(1+r^2\tau^2)g''+\left(\kappa_g^2+\kappa+\lambda\right)g=0.
\end{equation}
The solution of this equation depends on the sign of $\kappa_g^2+\kappa+\lambda$. 
\begin{enumerate}
\item Case $\kappa_g^2+\kappa+\lambda<0$. If 
$$\delta^2=-\frac{\kappa_g^2+\kappa+\lambda}{1+r^2\tau^2},$$
then the solution of \eqref{g2} is $g(t)=A\cosh(\delta t)+B\sinh(\delta t)$, $A,B\in\r$. Imposing the boundary condition \eqref{eq1}, we arrive $A=B=0$ and $g$ would be $0$, which it is not possible. 
\item Case  $\kappa_g^2+\kappa+\lambda=0$. Then the solution of \eqref{g2} is $g(t)=At+B$, $A,B\ n\r$. Again the boundary conditions \eqref{eq1} imply $A=B=0$. This case is not possible.
\item Case $\kappa_g^2+\kappa+\lambda>0$.   Let 
$$\mu^2=\frac{\kappa_g^2+\kappa+\lambda}{1+r^2\tau^2},\quad \mu>0.$$
The solutions of \eqref{eq1} are $g(t)=A \sin(\mu t)+B\cos(\mu t)$, where $A,B\in\r$.  Since $g(0)=g(L)=0$, then $B=0$ and $\mu L\in\{n\pi:n\in\mathbb{N}\}$. Therefore   the eigenvalues $\lambda=\lambda_n$ are given by 
\begin{equation}\label{lam}
\lambda_n=\frac{n^2\pi^2}{L^2}(1+r^2\tau^2)-(\kappa_g^2+\kappa),
\end{equation}
while the eigenfunctions are $g_n(t)=\sin(\frac{n\pi}{L} t)$, $n\in\mathbb{N}$.
\end{enumerate}
\end{enumerate}
We have seen in Sect. \ref{s2} that if the second eigenvalue $\lambda_2$ is negative, then the surface is not stable. The condition $\lambda_2<0$ is fulfilled whenever $L$ satisfies
\begin{equation}\label{eqcondicionL}
L>2\pi\sqrt{\frac{1+r^2\tau^2}{\kappa_g^2+\kappa}}.
\end{equation}
This is just \eqref{pla1} after some  manipulations and we conclude the proof.

 Theorem \ref{t1} gives a sufficient criterion of instability for truncated pieces $\c_\alpha(L)$ of circular cylinders because we have restricted to find eigenfunctions $u$ that are of type $u(s,t)=f(s)g(t)$. Using separation of variables, the eigenfunctions are of type $u_n(s,t):=\sin(\frac{n\pi}{L} t)$. This implies that the variation of $\c_\alpha(L)$ associated to $u_n$ does not depend on $s$. Thus the variation of the surface is axially symmetric with respect to the axis of the cylinder, in this case, the $z$-axis of $\mathcal{R}(\kappa,\tau)$. In consequence, if we are studying stability of $\c_\alpha(L)$ only for those variation that are axially-symmetric all the above computations provide sharp estimates in the stability/instability criterion.  By simplicity in the statement, we express the critical length in terms of $r$ and $\kappa_g$ given by \eqref{lam}.

 \begin{corollary}\label{cor} Let $\c_\alpha(L)$ be a truncated piece of a circular cylinder of length $L>0$. Then $\c_\alpha(L)$ is:
 \begin{enumerate}
 \item strongly stable for axially-symmetric variations if and only if $L\leq L_0/2$. 
 \item stable for axially-symmetric variations if and only if $L\leq L_0$. 
 \end{enumerate}
 \end{corollary}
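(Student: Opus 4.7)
The plan is to specialize the spectral analysis from the proof of Theorem \ref{t1} to the axially-symmetric class of test functions $u(s,t)=g(t)$. Under this restriction the eigenvalue problem \eqref{p-e} for the Jacobi operator $\L=\Delta+\kappa_g^2+\kappa$ collapses to the one-dimensional Dirichlet problem
\begin{equation*}
(1+r^2\tau^2)g''(t)+(\kappa_g^2+\kappa+\lambda)g(t)=0,\quad g(0)=g(L)=0,
\end{equation*}
whose eigenvalues $\lambda_n=n^2\pi^2(1+r^2\tau^2)/L^2-(\kappa_g^2+\kappa)$ and eigenfunctions $g_n(t)=\sin(n\pi t/L)$ are precisely those recorded in \eqref{lam}. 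Writing $L_0=2\pi\sqrt{(1+r^2\tau^2)/(\kappa_g^2+\kappa)}$ in the form provided by \eqref{eqcondicionL}, the sign condition $\lambda_n\geq 0$ becomes $L\leq nL_0/2$.

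For part (1), strong stability within the axially-symmetric subspace is equivalent to $\lambda_1\geq 0$. Indeed, if $\lambda_1<0$ then $g_1$ already produces $\mathcal{A}''(0)<0$, while if $\lambda_1\geq 0$ the expansion $g=\sum_n a_n g_n$ in the orthonormal basis yields $-\int g\L[g]=\sum_n a_n^2\lambda_n\|g_n\|^2\geq 0$. This gives the threshold $L\leq L_0/2$.

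For part (2) the admissible variations additionally satisfy $\int_0^L g\,dt=0$. Since $g_2$ has zero mean, whenever $L>L_0$ (so $\lambda_2<0$) the function $g_2$ itself is admissible and provides a negative direction, so the surface is unstable. Conversely, when $L\leq L_0$ it suffices to show that
\begin{equation*}
\nu_1:=\inf\left\{\frac{(1+r^2\tau^2)\int_0^L(g')^2}{\int_0^L g^2}\,:\,g\in H^1_0([0,L]),\ \int_0^L g=0,\ g\not\equiv 0\right\}=\frac{4\pi^2(1+r^2\tau^2)}{L^2},
\end{equation*}
because then $-\int g\L[g]\geq(\nu_1-\kappa_g^2-\kappa)\int g^2=\lambda_2\int g^2\geq 0$ on admissible $g$. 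The upper bound $\nu_1\leq 4\pi^2(1+r^2\tau^2)/L^2$ is witnessed by $g_2$. For the reverse inequality, Lagrange multipliers imply that a constrained minimizer solves $(1+r^2\tau^2)g''+\nu_1 g+c=0$ for some constant $c\in\r$. The case $c=0$ returns the unconstrained Dirichlet spectrum and only the zero-mean eigenfunctions $g_n$ with $n$ even are admissible, the smallest corresponding eigenvalue being $4\pi^2(1+r^2\tau^2)/L^2$. The case $c\neq 0$ reduces, after imposing the Dirichlet and zero-mean conditions, to the transcendental equation $\tan(\omega/2)=\omega/2$ with $\omega=L\sqrt{\nu_1/(1+r^2\tau^2)}$, whose smallest positive root exceeds $2\pi$ and so yields a strictly larger $\nu_1$.

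The main obstacle is this final step: confirming that adjoining the zero-mean constraint does not shift the spectrum below $\lambda_2$. Once both Lagrange-multiplier branches are ruled out, the equality $\nu_1=4\pi^2(1+r^2\tau^2)/L^2$ is secured and the converse direction in part (2) is complete.
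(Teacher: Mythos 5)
Your proposal is correct, and part (2) is argued along a genuinely more careful route than the paper's. For part (1) you and the paper do the same thing: read off the sign of $\lambda_1$ from \eqref{lam}. For part (2) the paper simply asserts that the eigenvalues of the volume-constrained problem \eqref{eq10} are those Dirichlet eigenvalues $\lambda_n$ whose eigenfunctions have zero mean, discards $\lambda_1$ because $\int u_1\neq 0$, and declares $\lambda_2$ to be the first constrained eigenvalue. Strictly speaking this glosses over a real point: a critical point of the Rayleigh quotient on the zero-mean subspace solves $\L[u]+\lambda u=c$ for a Lagrange multiplier $c$, not $\L[u]+\lambda u=0$, so the constrained spectrum is a priori not a subset of the Dirichlet spectrum and the interlacing $\lambda_1\leq\nu_1\leq\lambda_2$ alone does not give $\nu_1=\lambda_2$. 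Your treatment closes exactly this gap: you split into the $c=0$ branch (even-index Dirichlet eigenfunctions, smallest eigenvalue $4\pi^2(1+r^2\tau^2)/L^2$) and the $c\neq 0$ branch, which after imposing $g(0)=g(L)=0$ and $\int_0^L g=0$ reduces to $\tan(\omega/2)=\omega/2$; since the smallest positive root of $\tan x=x$ is $\approx 4.4934>\pi$, that branch only produces $\omega>2\pi$ and hence strictly larger eigenvalues. This confirms $\nu_1=\lambda_2$ and hence stability precisely for $L\leq L_0$. What your approach buys is a complete justification of the "if" direction of (2); what the paper's buys is brevity, its shortcut being harmless here only because the Lagrange-multiplier branch happens to lie above $\lambda_2$ -- which is precisely the computation you supply. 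The "unstable for $L>L_0$" direction (take $g_2$, which is admissible since it has zero mean) is identical in both.
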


 \begin{proof}
 The statement (1) is immediate from the expression of $\lambda_1$.  For the statement (2), notice that the eigenvalues of \eqref{eq10} are those $\lambda_n$ that satisfy that the corresponding eigenfunction belongs to $\mathcal{V}$. The first eigenfunction $u_1(s,t)=\sin(\frac{\pi}{L} t)$ does not satisfy the mean zero integral, so $\lambda_1$ is not an eigenvalue of \eqref{eq10}. The next eigenvalue is $\lambda_2$ where the eigenfunction $u_2(s,t)=\sin(\frac{2\pi}{L} t)$ has mean zero integral because 
 $$\int_{\c_\alpha(L)}u_2=\int_0^{2\pi}\int_0^L \sin\left(\frac{2\pi}{L} t\right)\, dsdt=0,$$
 where we have used that $\mbox{det}(g_{ij})=1$. Thus $\lambda_2$ is the first eigenvalue of \eqref{eq10}. Then the result is immediate from the expression of $\lambda_2$.
 \end{proof}

We finish this section by a remark about the case $\kappa_g^2+\kappa\leq 0$, where $\kappa_g$ is constant. In this case, it was proved in \cite{mpr} that $\c_\alpha$ is strongly stable. This can be deduced directly from the expression of the quadratic form $Q$ defined in  \eqref{e2}. Indeed, by integration by parts, for all $u\in C_0(\c_\alpha)$  we have
$$Q[u]=\int_{\c_\alpha} |\nabla u|^2-(\kappa_g^2+\kappa)u^2\geq 0,$$
because $\kappa_g^2+\kappa\leq 0$. On the other hand, if one still wants to calculare the Morse index,   the arguments in the proof of Thm. \ref{t1} fail because now the curve $\alpha$ is not closed. However, it is possible to compute explicitly the weak index of $\c_\alpha$ and deduce that this index is $0$, proving strongly stability when $\kappa_g^2+\kappa\leq 0$.  To show an example, consider the case $\kappa<0$ and $\kappa_g=0$. Then $\alpha$ is a geodesic of the hyperbolic plane $\h^2(\kappa)$. A parametrization of the corresponding cylinder is $\c_\alpha$ is $\psi(s,t)=(s,0,t)$, with $s\in (-s_1,s_1)$, where $s_1=2/\sqrt{-\kappa}$. Since $\alpha$ is not compact, we need to take an exhaustion of $\c_\alpha$ by considering pieces of cylinders of type $\psi([-s_0,s_0]\times [-L,L])$, with $0<s_0<s_1$, and letting $s_0\to s_1$ and $L\to\infty$. Now $\psi_s=\sigma E_1$ and $\psi_t=E_3$ and the eigenvalue problem \eqref{eq1} becomes
\begin{equation*}
\left\{\begin{split}
&\frac{1}{\sigma^2}f''g+fg''+\left(\kappa+\lambda\right)fg=0, \\
&g(-L)=g(L) =0,\\
&f(-s_0)=f(s_0)=0.
\end{split}\right.
\end{equation*}
Making a similar reasoning, we obtain $g(t)=\sin(\frac{k\pi}{L}t)$, $k\in\mathbb{N}$, and 
$$\frac{1}{\sigma^2}\frac{f''}{f}+\kappa+\lambda=\frac{k^2\pi^2}{L^2},$$
or equivalently, 
$$f''+\sigma^2\left(\kappa+\lambda-\frac{k^2\pi^2}{L^2}\right)f=0.$$
The solutions of this equation depend whether the parenthesis is negative, zero or positive. By the boundary conditions $f(-s_0)=f(s_0)=0$, the first two cases are discarded and necessarily $\kappa+\lambda-\frac{k^2\pi^2}{L^2}=\delta^2$, for some $\delta>0$. Then it is immediate to deduce   $\delta=\frac{\pi n}{2s_0}$, $n\in\mathbb{N}$. In particular, we have 
$$\lambda=\lambda_{k,n}=\frac{1}{\sigma^2}\left(\delta^2+\frac{k^2\pi^2}{ L^2}-\kappa\right)>0$$
because $\kappa<0$.  Thus all compact pieces $\psi([-s_0,s_0]\times [-L,L])$ are strongly stable. If now $s_0\to s_1$ and $L\to\infty$, we deduce that $\c_\alpha$ is strongly stable.

\section{Proof of Theorem \ref{t2}}\label{s5}

We first see that $\c_\alpha$ intersects orthogonally the planes $\Pi_c$ of equation $z=c$ and, in consequence, $\c_\alpha(L)$ is a capillary surface on $\Pi_0\cup\Pi_L$ with $\gamma=\pi/2$. A parametrization of $\Pi_c$ is $\phi(x,y)=(x,y,c)$, $x,y\in\r$. Using \eqref{equi} we have    
\begin{equation}\label{n2}
\begin{split}
\phi_x&=\sigma(E_1+\tau y  E_3),\\
\phi_y&=\sigma(E_2-\tau x E_3),\\
\widetilde{N}&=\frac{1}{\sqrt{1+\tau^2(x^2+y^2)}}(-\tau y E_1+\tau x E_2+E_3).
\end{split}
\end{equation}
In the cylinder $\c_\alpha$ and thanks to the computation of $\psi_s$ and $\psi_t$ of the parametrization \eqref{para}, the unit normal $N$ of $\c_\alpha$ is  
\begin{equation*}
N=\cos\frac{s}{R} E_1+\sin \frac{s}{R} E_2.
\end{equation*}
Thus, along $\partial\Sigma$, we have 
 $\langle N,\widetilde{N}\rangle=0$. This means that the intersection is orthogonal and that the contact angle is $\gamma=\pi/2$.

 As a consequence, the function $\q$ in \eqref{qq} reduces to $\q=\widetilde{A}(\widetilde{\nu},\widetilde{\nu})$. For the computation of  $\widetilde{A}(\widetilde{\nu},\widetilde{\nu})$, we calculate the second fundamental form $\widetilde{A}$ with respect to the basis $\{\phi_x,\phi_y\}$. We know that  
 $$\widetilde{A}=\left(\begin{array}{ll}\langle\overline{\nabla}_{\phi_x}\phi_x,\widetilde{N}\rangle &\langle\overline{\nabla}_{\phi_x}\phi_y,\widetilde{N}\rangle \\
 \langle\overline{\nabla}_{\phi_y}\phi_x,\widetilde{N}\rangle &\langle\overline{\nabla}_{\phi_y}\phi_y,\widetilde{N}\rangle \end{array}\right).$$
Using \eqref{eqc}, we have 
\begin{equation*}
\begin{split}
\overline{\nabla}_{\phi_x}\phi_x&=\sigma_x E_1-(\sigma_y+2\tau^2 y\sigma^2)E_2+\sigma_x\tau y E_3,\\
\overline{\nabla}_{\phi_x}\phi_y&=(\sigma_y+\tau^2\sigma^2 y)E_1+(\sigma_x+\sigma^2\tau^2 x)E_2+\tau(-\sigma_x x-\sigma +\sigma^2)E_3 ,\\
\overline{\nabla}_{\phi_y}\phi_x&=(\sigma_y+\tau^2\sigma^2 y)E_1+(\sigma_x+\sigma^2\tau^2 x)E_2+\tau(\sigma_y
y+\sigma -\sigma^2)E_3 ,\\
\overline{\nabla}_{\phi_y}\phi_y&= -(\sigma_x+2\tau^2\sigma^2 x)E_1+\sigma_y E_2-\sigma_y\tau x E_3.
\end{split}
\end{equation*}
Thus  the matrix of $\widetilde{A}$ is 
\begin{equation}\label{aa}
\frac{\tau}{\sqrt{1+\tau^2(x^2+y^2)}}\left(\begin{array}{ll}
-x(\sigma_y+2\tau^2 y \sigma^2)& -\sigma+(1+(x^2-y^2)\tau^2)\sigma^2-y\sigma_y\\
\sigma-(1-(x^2-y^2)\tau^2)\sigma^2+x\sigma_x & y(2 x\tau^2\sigma^2+\sigma_x)\end{array}
\right).
\end{equation}
We calculate $\widetilde{\nu}$. We have
\begin{equation*}
\widetilde{\nu}=  \widetilde{N}\times\frac{\psi_s}{|\psi_s |}= N= -\frac{1}{r}(x E_1+y E_2)=-\frac{1}{r\sigma}(x\phi_x+y\phi_y).
\end{equation*}
It is now immediate from \eqref{aa} that $ \widetilde{A}(\widetilde{\nu},\widetilde{\nu})=0$ and consequently $\textbf{q}=0$. This implies that   the Robin condition \eqref{nu2} in the eigenvalue problem  is simply 
$$\frac{\partial u}{\partial\nu}=0\quad \mbox{in }\partial\c_\alpha(L).$$
The computation of $\nu$ gives 
$$\nu=N\times \frac{\psi_s}{|\psi_s |}=\frac{1}{\sqrt{1+r^2\tau^2}}(-y\tau  E_1+ x\tau E_2+E_3).$$
Since $\nabla u=f'g \psi_s+fg'\psi_t$, we have
\begin{equation}\label{nu3}
\frac{\partial u}{\partial \nu}=\langle\nabla u,\nu\rangle=\frac{fg'}{\sqrt{1+r^2\tau^2}}.
\end{equation}
Again, consider separation of variables as in Thm. \ref{t1}. Now the eigenvalue problem is \eqref{eq1} together with \eqref{nu3}, which leads to
\begin{equation}\label{eq3}
\left\{\begin{split}
&f''g+2r\tau f'g'+(1+r^2\tau^2)fg''+\left(\kappa_g^2+\kappa+\lambda\right)fg=0,\\
&g'(0)=g'(L) =0.
\end{split}\right.
\end{equation}
As in the proof of Thm. \ref{t1}, we divide by $fg$ and a similar argument allows to deduce $f'(s)=cf(s)$. Again $c=0$ because    $f(s)=e^{cs}$ is not possible by the periodicity of  $f$. Since $c=0$,  $f$ is a non-zero constant  function and     \eqref{eq3} reduces to  
\begin{equation*}
\left\{
\begin{split}
&g''+\frac{\kappa_g^2+\kappa+\lambda}{1+r^2\tau^2}g=0,\\
&g'(0)=g'(L)=0.
\end{split}
\right.
\end{equation*}

If $\kappa_g^2+\kappa+\lambda\leq 0$  then $g$ does not fulfill the boundary conditions at $t=0$ and $t=L$. Thus $\kappa_g^2+\kappa+\lambda$  must be positive. Let
$$\mu^2=\frac{\kappa_g^2+\kappa+\lambda}{1+r^2\tau^2},\quad\mu>0.$$
The boundary condition $g'(0)=0$ implies $g(t)=A\cos(\mu t)$ for some constant $A\not=0$. The other boundary condition  $g'(L)=0$ yields $\mu=n\pi/L$, $n\in\mathbb{N}$. Consequently, the eigenvalues are
$$
\lambda_n=\frac{n^2\pi^2}{L^2}(1+r^2\tau^2)-(\kappa_g^2+\kappa).
$$
These eigenvalues coincide   with the ones deduced in \eqref{lam}.   In order to achieve instability we impose $\lambda_2<0$ which yields
$$
L>2\pi\sqrt{\frac{1+r^2\tau^2}{\kappa_g^2+\kappa}}.
$$
This is again \eqref{eqcondicionL} and we conclude the proof.

Theorem \ref{t2} considers the stability in the partitioning problem of circular cylinders. We now treat vertical cmc cylinders when $\kappa_g^2+\kappa\leq 0$ and its stability in the partitioning problem. Notice that by Lem. \ref{lem1}, the mean curvature of $\c_\alpha$ is constant but this is not sufficient to be a capillary surface because of the condition of contact angle. In fact,  the intersection of $\c_\alpha$ with the planes $\Pi_c$ is not orthogonal unless that $\tau=0$. The condition $\kappa_g^2+\kappa\leq 0$ implies that the curve $\alpha$ is not closed.  We have two cases.
\begin{enumerate}
\item Case $\kappa_g=0$, that is, $\alpha$ is a geodesic. Without loss of generality, we can assume that $\alpha$ is parametrized by $\alpha(s)=(s,0,0)$ and $\c_\alpha$ by $\psi(s,t)=(s,0,t)$. Then $\psi_s=\sigma E_1$ and $\psi_t=E_3$. This gives $N=E_2$. On the other hand, from \eqref{n2} we have   
$$\widetilde{N}=\frac{1}{\sqrt{1+s^2\tau^2}}(s\tau E_2+E_3)$$
along $\c_\alpha\cap\Pi_c$. This gives $\langle N,\widetilde{N}\rangle=s\tau/\sqrt{1+s^2\tau^2}$ which it is only constant if $\tau=0$. This case  corresponds when  $\ekt$ is the product space $\h^2(\kappa)\times\r$. 

\item Case $\kappa_g\not=0$. Then necessarily $\kappa<0$. Now $\alpha$ is a horocycle or an equidistant curve. The parametrization of $\alpha$ is 
$\alpha(s)=(r\cos\frac{s}{R},r\sin\frac{s}{R}-y_0)$ where $y_0\in (0,\frac{2}{\sqrt{-\kappa}}]$ and $r>\frac{2}{\sqrt{-\kappa}}-y_0$. Since the parametrization of  $\c_\alpha$ is $\psi(s,t)=(\alpha(s),t)$, using \eqref{equi} we have 
\begin{equation*} 
\begin{split}
\psi_s&=\frac{r\sigma}{R}\left(-\sin\frac{s}{R} E_1+\cos \frac{s}{R} E_2+(r\tau+y_0\sin\frac{s}{R})E_3\right), \\
\psi_t&=E_3.
\end{split}
\end{equation*}
Thus
$$N=\cos\frac{s}{R} E_1+\sin \frac{s}{R} E_2.$$
Now along $\c_\alpha\cap\Pi_c$, we have
$$\langle N,\widetilde{N}\rangle=\frac{y_0\tau\cos \frac{s}{R}}{\sqrt{1+\tau^2(r^2+y_0^2-2ry_0\sin \frac{s}{R})}}.$$
As a consequence,  the contact angle along  the intersection curve is constant if and only if $\tau=0$ and, in such a case,   $\gamma=\pi/2$.  

\end{enumerate}
Summarizing, the case $\kappa_g^2+\kappa\leq 0$ only occurs if $\ekt$ is $\h^2(\kappa)\times\r$. In the following result, we prove that $\c_\alpha$ is strongly stable. This extends to the partitioning problem the analogous situation of the case proved in \cite{mpr}.

\begin{theorem} \label{t5}
Let $\c_\alpha$ be a vertical cmc cylinder in $\h^2(\kappa)\times\r$ such that $\kappa_g^2+\kappa<0$. Then, for any $L>0$ the truncated cylinder $\c_\alpha(L)$ is strongly stable  on $\Pi_0\cup\Pi_L$ in the partitioning problem.
\end{theorem}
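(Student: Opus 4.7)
The plan is to show that in the product space $\h^2(\kappa)\times\r$, where $\tau=0$, the boundary contribution in the capillary second variation vanishes identically, reducing the quadratic form to a manifestly non-negative expression.

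First I would verify the geometry at the boundary of $\c_\alpha(L)$. Since $\tau=0$, the horizontal slices $\Pi_c=\h^2(\kappa)\times\{c\}$ are totally geodesic in $\h^2(\kappa)\times\r$, so their second fundamental form $\widetilde{A}$ vanishes identically. Specializing the calculation in \eqref{aa} (in the proof of Thm. \ref{t2}) to $\tau=0$ makes the matrix zero, and the same specialization of \eqref{n2} yields $\widetilde{N}=E_3$ along $\Pi_c$, while $N$ remains horizontal on $\c_\alpha$. Hence $\langle N,\widetilde{N}\rangle=0$, the contact angle is $\gamma=\pi/2$, and $\c_\alpha(L)$ is a capillary surface. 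In particular $\q=\widetilde{A}(\widetilde{\nu},\widetilde{\nu})=0$ along $\partial\c_\alpha(L)$, so the Robin condition \eqref{nu2} reduces to the pure Neumann condition $\partial u/\partial\nu=0$.

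Next I would rewrite the second variation using this information. Since $\tau=0$, Lem. \ref{lem1} gives $|A|^2+\mathrm{Ric}(N)=\kappa_g^2+\kappa$, and hence $\L[u]=\Delta u+(\kappa_g^2+\kappa)u$. Integration by parts on $\c_\alpha(L)$ yields
$$\E''(0)=-\int_{\c_\alpha(L)}u\,\L[u]+\int_{\partial\c_\alpha(L)}u\,\frac{\partial u}{\partial\nu}=\int_{\c_\alpha(L)}|\nabla u|^2-(\kappa_g^2+\kappa)u^2,$$
where the boundary term in Green's identity cancels against the boundary term coming from the first expression for $\E''(0)$.

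Finally, since $\kappa_g^2+\kappa<0$ by hypothesis, both summands in the integrand are non-negative for every test function $u\in C^\infty(\c_\alpha(L))$, so $\E''(0)\geq0$ regardless of $L$. This proves strong stability in the partitioning problem. I do not anticipate a serious obstacle: the critical step is the vanishing of $\q$, which follows immediately from the product structure of $\h^2(\kappa)\times\r$, and once this is in hand the argument is parallel to the one used in Sect. \ref{s4} for the unconstrained Morse index, the Neumann condition serving precisely to kill the boundary term produced by Green's identity.
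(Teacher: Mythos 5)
Your proposal is correct and follows essentially the same route as the paper: one checks that for $\tau=0$ the slices $\Pi_c$ are totally geodesic so that $\gamma=\pi/2$ and $\q=0$, and then integration by parts turns $\E''(0)$ into $\int_{\c_\alpha(L)}|\nabla u|^2-(\kappa_g^2+\kappa)u^2\geq 0$. The paper's proof is just a terser version of the same computation (with test functions taken compactly supported, since $\alpha$ is non-closed and $\c_\alpha(L)$ is non-compact in this case).
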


\begin{proof}
The proof is a direct computation of the quadratic form $\E_p''(0)$. Let $u\in C_0(\c_\alpha(L))$. By an integration by parts, we have
$$\E_p''(0)= -\int_{\c_\alpha(L)} u\left(\Delta u+(\kappa_g^2+\kappa)u\right) +\int_{\partial \c_\alpha(L)}
u \frac{\partial u}{\partial\nu}  ds=\int_{\c_\alpha(L)} |\nabla u|^2-(\kappa_g^2+\kappa)u^2\geq 0$$
because $\kappa_g^2+\kappa\leq 0$.
\end{proof}

\section*{Acknowledgements} Antonio Bueno has been partially supported by CARM, Programa Regional de Fomento de la Investigaci\'on, Fundaci\'on S\'eneca-Agencia de Ciencia y Tecnolog\'{\i}a Regi\'on de Murcia, reference 21937/PI/22. Rafael L\'opez  is a member of the IMAG and of the Research Group ``Problemas variacionales en geometr\'{\i}a'',  Junta de Andaluc\'{\i}a (FQM 325). This research has been partially supported by MINECO/MICINN/FEDER grant no. PID2020-117868GB-I00,  and by the ``Mar\'{\i}a de Maeztu'' Excellence Unit IMAG, reference CEX2020-001105- M, funded by MCINN/ AEI/10.13039/501100011033/ CEX2020-001105-M.



\begin{thebibliography}{1}

\bibitem{abro} U. Abresch, H. Rosenberg, Generalized Hopf differentials, \emph{Mat. Contemp.} \textbf{28} (2005), 1--28.

\bibitem{ag} D. Aguiar,  \textit{Stability of rotating liquid films}, Q. J. Mech. Appl. Math. \textbf{55}  (2002), 327--343.

\bibitem{bc} J. L. Barbosa, M. P.  do Carmo, \textit{Stability of hypersurfaces with constant mean curvature}, Math. Z. \textbf{185} (1984) 339--353. 

\bibitem{bce} J. L. Barbosa, M. P.  do Carmo, J. Eschenburg, \textit{Stability of hypersurfaces of constant mean curvature in Riemannian manifolds},  Math. Z. \textbf{197} (1988) 123--138.  

\bibitem{bulo} A. Bueno, R. L\'opez, \emph{On the stability of Killing cylinders in hyperbolic space}, to appear in J. Geom. Anal.

\bibitem{blo} A. Bueno,  R. L\'opez, I. Ortiz, 
\textit{The Plateau-Rayleigh instability of translating $\lambda$-solitons}, Results Math. \textbf{79} (2024),  Paper No. 58, 21 pp.

\bibitem{dan} B. Daniel, \textit{Isometric immersions into 3-dimensional homogeneous manifolds}. Comment. Math. Helv, \textbf{82}   (2007) 87--131. 

\bibitem{dhm} B. Daniel, L. Hauswirth, P. Mira, Constant mean curvature surfaces in homogeneous manifolds, Korea Institute for Advanced Study, Seoul, Korea, 2009.


\bibitem{femi} I. Fern\'andez, P. Mira, Constant mean curvature surfaces in 3-dimensional Thurston geometries. In \emph{Proceedings of the International Congress of Mathematicians}, Volume II (Invited Conferences), pages 830--861. Hindustan Book Agency, New Delhi, 2010.

\bibitem{de} P-G De Gennes, F. Brochard-Wyart , D.  Qu\'er\'e, \emph{Capillarity and Wetting Phenomena: Drops, Bubbles, Pearls, Waves.} Springer-Verlag, New York, 2004.

\bibitem{lm} A. M. Lerma, J. M.  Manzano, \textit{Compact stable surfaces with constant mean curvature in Killing submersions}, Ann. Mat. Pura Appl.  \textbf{196} (2017), 1345--1364.






\bibitem{lo0} R. L\'opez,  \textit{A criterion on instability of cylindrical rotating surfaces}, Archiv der Mathematik, \textbf{94}  (2010), 91--99.



\bibitem{lo3} R. L\'opez, \textit{Plateau-Rayleigh instability of singular minimal surfaces},  Commun. Pure Appl. Anal. \textbf{21} (2022),  2981--2997.

\bibitem{mpr} J. M. Manzano, J. P\'erez, M. Rodr\'{\i}guez, \textit{Parabolic stable surfaces with constant mean curvature}, Calc. Var. Partial Differential Equations \textbf{42} (2011),  137--152.

\bibitem{mc} J. McCuan, \textit{The stability of cylindrical pendant drops},  Mem. Amer. Math. Soc.  \textbf{250} (2017), no. 1189.

\bibitem{mmp} W. H. Meeks III, P. Mira,   J. P\'erez, \textit{The geometry of stable minimal surfaces in metric Lie groups}, Trans. Amer. Math. Soc. \textbf{372} (2019),  1023--1056.






\bibitem{nit} J. Nitsche, Stationary partitioning of convex bodies. Arch. Ration. Mech. Anal. 89 (1985), 1--19.

\bibitem{rs} A. Ros, R. Souam, \textit{On stability of capillary surfaces in a ball}, Pacific J. Math. \textbf{178} (1997),  345--361.

\bibitem{rv} A. Ros, E. Vergasta, \textit{Stability for hypersurfaces of constant mean curvature with free boundary}, 
Geom. Dedic. \textbf{56} (1995), 19--33.

\bibitem{ro} H. Rosenberg,   \textit{Constant mean curvature surfaces in homogeneously regular 3-manifolds}, Bull. Austral. Math. Soc. \textbf{74}, 227--238.



\bibitem{so} R. Souam, \emph{On stability of stationary hypersurfaces for the partitioning problem for balls in space forms}, Math. Z. \textbf{224} (1997),  195--208.



\end{thebibliography}
\end{document}